\DeclareMathSymbol{\subsetneqq}{\mathbin}{AMSb}{36}
\theoremstyle{plain}
 \numberwithin{equation}{section}
\newtheorem{theorem}{Theorem}[section]
\newtheorem{lemma}{Lemma}[section]
\begin{document}
\title[Long time behavior for a semilinear hyperbolic equation]{Long time
behavior for a semilinear hyperbolic equation with asymptotically vanishing
damping term and convex potential}
\author{}
\maketitle

\medskip

\centerline{\scshape Ramzi May }

{\footnotesize \ 
\centerline{University of Carthage Tunisia} \centerline {and} %
\centerline{King Faisal University KSA} \centerline{rmay@kfu.edu.sa} }
\bigskip

\bigskip

\noindent \textbf{Abstract} Recently, A. Cabot and P. Frankel studied the
long time behavior of solutions to the following semilinear hyperbolic
equation:

\begin{equation}
\frac{d^{2}u}{dt^{2}}(t)+\gamma (t)\frac{du}{dt}(t)+Au(t)+f(u(t))=0,~\ t\geq
0,  \tag{E}
\end{equation}%
where $\gamma :\mathbb{R}^{+}\rightarrow \mathbb{R}^{+}$, the damping term,
is a decreasing function, $f$ is\ the gradient of a given convex function
defined on an a real Hilbert space $V,$ and $A:V\rightarrow V^{\prime }$ is
a linear and continuous operator assumed to be symmetric, monotone and
semi-coercive. They proved that if the damping term $\gamma (t)$ behaves
like $\frac{K}{t^{\alpha }}$ as $t\rightarrow +\infty ,$ for some $K>0$ and$%
~\alpha \in ]0,1[,$ then every \underline{bounded} solution $u$ to the
equation (E) (i.e. $u\in L^{\infty }(0,+\infty ;V))$ converges weakly in $V$%
\ as $t\rightarrow +\infty $ toward a solution to the stationary equation $%
Av+f(v)=0$. They left open the question: Does convergence still hold without
assuming the boundedness of the solution? In this paper, we give a positive
answer to this question. Our approach relies on precise estimates on the
decay rates for the energy function along trajectories of (E).

\noindent {\textbf{keywords}: Dissipative hyperbolic equation,
asymptotically small dissipation, asymptotic behavior, energy function,
convex function.}

\noindent {\small \textbf{AMS classification numbers:}} 34G10, 34G20, 35B40,
35L70.

\section{Introduction and statement of results}

Throughout this paper, we follow the same notations as in the paper \cite{CF}%
. Let $H$ be a real Hilbert space with inner product and norm respectively
denoted by $\langle .,.\rangle $ and $\left\vert .\right\vert .$ Let $V$ be
a real Hilbert space such that $V\hookrightarrow H\hookrightarrow V^{\prime
} $ with continuous and dense injections, where $V^{\prime }$ is the dual
space of $V.$ Let $\gamma :\mathbb{R}^{+}\longrightarrow \mathbb{R}^{+}$ be
a decreasing function which belongs to the space $W_{loc}^{1,1}\left( 
\mathbb{R}^{+};\mathbb{R}^{+}\right) .$ Let $A:V\rightarrow V^{\prime }$ be
a linear and continuous operator such that the associated bilinear form $%
a:V\times V\rightarrow \mathbb{R}$ $(u,v)\mapsto \langle Au,v\rangle
_{V^{\prime },V}$ is symmetric, positive and satisfies the following
property:%
\begin{equation}
\exists \lambda \geq 0,\mu >0:\forall v\in V,~a(v,v)+\lambda \left\vert
v\right\vert ^{2}\geq \mu \left\Vert v\right\Vert _{V}^{2}.  \label{coer}
\end{equation}%
Let $f:V\rightarrow V^{\prime }$ be a continuous function deriving from a
convex potential i.e, there exists a $C^{1}$ convex function $F:V\rightarrow 
\mathbb{R}$ such that:%
\begin{equation*}
\forall u,v\in V,~F^{\prime }(u)(v)=\langle f(u),v\rangle _{V^{\prime },V}.
\end{equation*}%
It is clear that the function $\phi :V\rightarrow \mathbb{R}$ defined by:%
\begin{equation*}
\phi (v)=\frac{1}{2}a(v,v)+F(v)
\end{equation*}%
is $C^{1},$ convex and satisfies the following property:%
\begin{equation*}
\forall u,v\in V,\phi ^{\prime }(u)(v)=\langle Au+f(u),v\rangle _{V^{\prime
},V}.
\end{equation*}%
We assume moreover that the function $\phi $ is bounded from below and that
the set 
\begin{equation*}
\arg \min \phi =\left\{ v\in V:\phi (v)=\min \phi \right\}
\end{equation*}%
is not empty. Notice that, since $\phi $ is convex, $\arg \min \phi $
coincides with the set $S=\left\{ v\in V:Av+f(v)=0\right\} $ of critical
points of $\phi $.

In this paper, our purpose is to investigate the asymptotic behavior of the
semilinear hyperbolic equation:

\begin{equation}
\frac{d^{2}u}{dt^{2}}(t)+\gamma (t)\frac{du}{dt}(t)+Au(t)+f(u(t))=0,~\ t\geq
0.  \tag{E}
\end{equation}%
This equation and its ODE version (called the heavy ball with friction) have
been studied by many authors under various conditions on the damping and
potential terms, see for instance, \cite{Al}, \cite{AA}, \cite{CEG}, \cite{C}%
, \cite{CF}, \cite{HJ}, \cite{HJ2}, \cite{Z}, and references there in.

\noindent By a solution of (E) we mean a function $u:\mathbb{R}%
^{+}\longrightarrow H$ which belongs to the class 
\begin{equation*}
W_{loc}^{1,1}(\mathbb{R}^{+},V)\cap W_{loc}^{2,1}(\mathbb{R}^{+},H)
\end{equation*}%
and satisfies the equation (E) for almost every $t\geq 0.$ A solution $u$ to
(E) is said to be bounded if it belongs moreover to the space $L^{\infty
}(0,+\infty ;H).$

\noindent In \cite{CF}, Cabot and Frankel proved the following interesting
convergence result:

\begin{theorem}[A. Cabot and P. Frankel]
\label{th1}Assume that there exist $\alpha \in ]0,1[$ and$~K_{1},K_{2}>0$
such that for every $t\geq 0$, $\frac{K_{1}}{(1+t)^{\alpha }}\leq \gamma
(t)\leq \frac{K_{2}}{(1+t)^{\alpha }}.$ Let $u$ be a \underline{bounded}
solution to (E). Then there exists $u_{\infty }\in \arg \min \phi $ such
that $u(t)$ converges weakly in $V$ to $u_{\infty }$ as $t\rightarrow
+\infty .$
\end{theorem}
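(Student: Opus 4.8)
The plan is to run an energy/Lyapunov argument and then invoke Opial's lemma. First I would introduce the energy
\[
E(t)=\tfrac12\,|u'(t)|^{2}+\phi(u(t)),
\]
and differentiate it along (E): testing the equation with $u'(t)$ and using $\phi'(u)(v)=\langle Au+f(u),v\rangle_{V',V}$ gives the identity $E'(t)=-\gamma(t)\,|u'(t)|^{2}\le 0$. Hence $E$ is nonincreasing and, being bounded below by $\min\phi$, converges to some $E_\infty\ge\min\phi$; integrating the identity yields the basic estimate $\int_0^{\infty}\gamma(t)\,|u'(t)|^2\,dt<\infty$. Since $u\in L^\infty(0,\infty;H)$ and $E$ is bounded, the semi-coercivity \eqref{coer} together with the convexity lower bound $F(v)\ge F(0)+\langle f(0),v\rangle_{V',V}$ shows that $\mu\|u(t)\|_V^2$ is controlled by bounded quantities, so $u\in L^\infty(0,\infty;V)$; in particular $|u'(t)|$ is bounded and $\phi'(u(t))=Au(t)+f(u(t))$ is bounded in $V'$.

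The heart of the matter is to prove $E_\infty=\min\phi$, i.e. $\phi(u(t))\to\min\phi$ and $|u'(t)|\to0$. Here both exponent conditions enter decisively: $\alpha>0$ makes the damping vanish, while $\alpha<1$ forces $\Gamma(t):=\int_0^t\gamma(s)\,ds\to+\infty$. I would exploit the latter through the following reduction. Writing $\psi(t)=\phi(u(t))-\min\phi\ge0$, one has $E(t)-\min\phi=\tfrac12|u'(t)|^2+\psi(t)$, a nonincreasing function with limit $E_\infty-\min\phi\ge0$. If this limit were a positive number $\ell$, then $\gamma(t)(E(t)-\min\phi)\ge \ell\,\gamma(t)$ for all $t$, whence $\int_0^\infty\gamma(t)(E(t)-\min\phi)\,dt\ge \ell\int_0^\infty\gamma(t)\,dt=+\infty$. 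It therefore suffices to establish the single integrability statement
\[
\int_0^{\infty}\gamma(t)\,\big(\phi(u(t))-\min\phi\big)\,dt<\infty,
\]
since combined with $\int_0^\infty\gamma|u'|^2<\infty$ it makes $\int_0^\infty\gamma(E-\min\phi)\,dt<\infty$ and contradicts $\ell>0$.

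To obtain this bound I would fix $z\in S=\arg\min\phi$, test (E) with $u(t)-z$, and set $q(t)=\langle u'(t),u(t)-z\rangle$. Using $\langle u''(t),u(t)-z\rangle=q'(t)-|u'(t)|^2$ and the convexity inequality $\langle\phi'(u),u-z\rangle_{V',V}\ge\phi(u)-\phi(z)=\psi$, one gets the pointwise bound $\psi\le -q'+|u'|^2-\gamma q$. Eliminating $|u'|^2$ through the energy identity $|u'|^2=-\gamma^{-1}(E-\min\phi)'$, multiplying by $\gamma$ and integrating leads, after integrating the terms $\int\gamma q'$ and $\int(E-\min\phi)'$ by parts, to a bound for $\int_0^T\gamma\psi$ in terms of boundary values, of $\int_0^\infty|\gamma'(t)|\,|u'(t)|\,dt\le\gamma(0)\,\|u'\|_\infty<\infty$ (here $\gamma$ decreasing is used), and of the term $\int_0^\infty\gamma(t)^2\,q(t)\,dt$. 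This last term is the main obstacle: since $|q|\le |u(t)-z|\,|u'(t)|$ is merely bounded by a multiple of $|u'(t)|$, controlling $\int\gamma^2|u'|$ requires quantitative decay of $|u'(t)|$ beyond the raw estimate $\int\gamma|u'|^2<\infty$. I would close this gap by first proving a decay rate for $E(t)-\min\phi$ (hence for $|u'(t)|$) adapted to $\gamma(t)\sim t^{-\alpha}$—for instance via a weighted Lyapunov function $t\mapsto (E(t)-\min\phi)+c\,\gamma(t)\,q(t)$—and feeding this decay back into the integral; this is exactly where the boundedness of $u$ and the precise decay estimates announced in the abstract do the work.

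Finally, with $\phi(u(t))\to\min\phi$ and $|u'(t)|\to0$ in hand, I would conclude by Opial's lemma in $H$. For each $z\in S$ the function $h(t)=\tfrac12|u(t)-z|^2$ satisfies $h''+\gamma h'=|u'|^2-\langle\phi'(u),u-z\rangle_{V',V}\le|u'|^2$, and the auxiliary estimates above ensure that $\lim_{t\to\infty}h(t)$ exists; thus $\lim_{t\to\infty}|u(t)-z|$ exists for every $z\in S$. Moreover, any weak-$H$ cluster point $w$ of $u(t)$ is, thanks to the $V$-boundedness, also a weak-$V$ limit of a subsequence, so by weak lower semicontinuity of the convex continuous function $\phi$ one gets $\phi(w)\le\liminf\phi(u(t))=\min\phi$, i.e. $w\in S$. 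Opial's lemma then yields a single $u_\infty\in S$ with $u(t)\rightharpoonup u_\infty$ weakly in $H$; since $u$ is bounded in $V$ and $V$ is reflexive, every weak-$V$ cluster point must equal this common weak-$H$ limit $u_\infty$, giving the claimed weak convergence $u(t)\rightharpoonup u_\infty$ in $V$ with $u_\infty\in\arg\min\phi$.
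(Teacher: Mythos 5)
Your proposal follows a sensible skeleton (energy decay, an estimate on $\int\gamma\,(\phi(u)-\min\phi)\,dt$ obtained by testing with $u-z$, then Opial's lemma), but it contains two genuine gaps, and both sit exactly at the hard quantitative steps. First, the term $\int_0^T\gamma^2 q\,dt$, which you yourself flag as ``the main obstacle'': with only the raw estimate $\int_0^\infty\gamma|u'|^2\,dt<\infty$ and the boundedness of $u$, Cauchy--Schwarz gives
\begin{equation*}
\int_0^\infty\gamma^2|q|\,dt\;\leq\;C\Bigl(\int_0^\infty\gamma^3\,dt\Bigr)^{1/2}\Bigl(\int_0^\infty\gamma|u'|^2\,dt\Bigr)^{1/2},
\end{equation*}
and $\int_0^\infty\gamma^3\,dt\simeq\int_0^\infty(1+t)^{-3\alpha}dt$ is finite only when $\alpha>1/3$. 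For $\alpha\leq 1/3$ your argument needs a decay rate for $|u'(t)|$ that you never prove; the sentence ``I would close this gap by first proving a decay rate \dots\ via a weighted Lyapunov function \dots\ and feeding this decay back'' is a description of the missing proof, not a proof, and carrying it out is precisely where all the work lies. Second, in the Opial step you assert that ``the auxiliary estimates above ensure that $\lim_{t\to\infty}h(t)$ exists.'' They do not: from $h''+\gamma h'\leq|u'|^2$, the standard argument (integrating factor $e^{\Gamma}$ with $\Gamma(t)=\int_0^t\gamma$, then Fubini, using $\int_s^\infty e^{\Gamma(s)-\Gamma(t)}dt\simeq(1+s)^{\alpha}/K_1$) yields $\int_0^\infty[h']_+\,dt<\infty$, hence existence of the limit, only under the hypothesis $\int_0^\infty(1+t)^{\alpha}|u'(t)|^2dt<\infty$ --- which is exactly the hypothesis of Lemma \ref{le3}. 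This weight $(1+t)^{+\alpha}$ is far stronger than the weight $(1+t)^{-\alpha}$ you actually control via $\int\gamma|u'|^2<\infty$, and boundedness of $u$ does not rescue the step. So even in the regime $\alpha>1/3$, where your first gap closes, the conclusion does not follow from what you have established; both gaps reduce to the same missing ingredient, namely a bootstrap improving the integrability of $|u'|^2$.

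It is instructive to compare with how the paper closes exactly this gap (and does so without any boundedness assumption, which is its whole point). Instead of multiplying the differential inequality for $p(t)=\frac12|u(t)-v|^2$ by $\gamma$, the paper multiplies by polynomial weights $\lambda_r(t)=(1+t)^r$. Two devices then eliminate every term involving the possibly unbounded quantity $p$: for $r\leq 0$ the interior coefficient satisfies $(\lambda_r\gamma)'-\lambda_r''\leq0$ (monotonicity of $\lambda_r\gamma$ plus convexity of $\lambda_r$), so that term can simply be dropped; and the boundary terms in $p$ are absorbed by the elementary inequality $bx-ax^2\leq b^2/(4a)$ applied with $x=\sqrt{p(T)}$, yielding the bound (\ref{Energy}) with no bound on $p$ ever used. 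Taking $r=-\alpha$ gives $\int\lambda_{-\alpha}\mathcal{E}\,dt<\infty$, hence $\mathcal{E}\to0$; then Lemma \ref{le1} converts weighted integrability of $\mathcal{E}$ into a pointwise decay rate for $\mathcal{E}$ together with an \emph{improved} weight in $\int\lambda_{\,\cdot}|u'|^2dt$, and iterating this bootstrap (once for Theorem \ref{th2}, inductively via Lemma \ref{le2} for Theorem \ref{th3}) produces $\int_0^\infty(1+t)^{\alpha}|u'|^2dt<\infty$, which is exactly what Lemma \ref{le3} requires for the Opial argument. If you want to salvage your $\gamma$-weighted approach, you would need to implement an analogous bootstrap; as written, the proposal proves neither $\mathcal{E}_\infty=\min\phi$ for $\alpha\leq1/3$ nor, for any $\alpha$, the weighted estimate that makes Opial's lemma applicable.
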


An open question left in the paper \cite{CF} was whether the condition $u\in
L^{\infty }(0,+\infty ;H)$ is really necessary in the previous theorem (see
Remark 3.15 in \cite{CF}). In the present paper, we will show, without
assuming the boundedness of the solution, that the weak convergence result
still holds in the case $\alpha \in \lbrack 0,\frac{1}{2}]$ and in the case $%
\alpha \in ]\frac{1}{2},1[$ up to a supplementary assumption on the
derivative of the damping term $\gamma .$ Such assumption is satisfied, for
instance, by functions of the form $\frac{K}{(1+t)^{\alpha }}$ where $K>0.$
Moreover, in each case, we will establish an estimate on the rate of the
decay for the energy function on the trajectories of (E). More precisely, we
will prove the following theorems:

\begin{theorem}
\label{th2}Assume that there exist $\alpha \in \lbrack 0,\frac{1}{2}]$ and$%
~K>0$ such that for every $t\geq 0$, $\gamma (t)\geq \frac{K}{(1+t)^{\alpha }%
}.$Then for every solution $u$ to (E) there exists $u_{\infty }\in \arg \min
\phi $ such that $u(t)$ converges weakly in $V$ to $u_{\infty }$ as $%
t\rightarrow +\infty .$ Moreover, 
\begin{equation*}
\frac{1}{2}\left\vert \frac{du}{dt}(t)\right\vert ^{2}+\phi (u(t))-\min \phi
=\circ (\frac{1}{t})\text{ as }t\rightarrow +\infty .
\end{equation*}
\end{theorem}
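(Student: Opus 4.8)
The plan is to run a Lyapunov analysis built on the energy and on an anchoring function attached to the minimizers, and then to feed the resulting integrability estimates into Opial's lemma. First I introduce the energy $E(t)=\tfrac12|u'(t)|^{2}+\phi(u(t))-\min\phi\ge0$. Multiplying (E) by $u'(t)$ and using $\phi'(u)(u')=\langle Au+f(u),u'\rangle_{V',V}$ gives the dissipation identity $E'(t)=-\gamma(t)|u'(t)|^{2}\le0$; hence $E$ is nonincreasing with a finite limit, $|u'(t)|$ is bounded, and $\int_{0}^{\infty}\gamma|u'|^{2}\,dt<\infty$. Fixing $u_{*}\in\arg\min\phi$ and setting $h(t)=\tfrac12|u(t)-u_{*}|^{2}$, the equation together with the gradient inequality $\phi'(u)(u-u_{*})\ge\phi(u)-\min\phi$ yields the anchoring inequality $h''+\gamma h'+(\phi(u)-\min\phi)\le|u'|^{2}$.

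To capture the rate I would study the functional
\[ \mathcal E(t)=t\,E(t)+\langle u(t)-u_{*},u'(t)\rangle+\tfrac12\gamma(t)\,|u(t)-u_{*}|^{2}. \]
A direct computation, in which the coupling terms $\pm\gamma\langle u-u_{*},u'\rangle$ and $\pm(\phi(u)-\min\phi)$ cancel, gives $\mathcal E'(t)\le\bigl(3-2t\gamma(t)\bigr)\tfrac12|u'|^{2}+\tfrac12\gamma'(t)\,|u-u_{*}|^{2}$. Since $\gamma$ is decreasing and $t\gamma(t)\ge Kt(1+t)^{-\alpha}\to+\infty$, both terms are nonpositive for $t$ large, so $\mathcal E$ is eventually nonincreasing, hence bounded above. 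Because $E\ge0$ and $|u'|$ is bounded, the bound $\mathcal E(t)\le C$ controls the quadratic $\tfrac12\gamma|u-u_{*}|^{2}-|u'|\,|u-u_{*}|$ and forces a first growth estimate $|u(t)-u_{*}|=O((1+t)^{\alpha})$. Inserting this back into the dissipation estimate improves the decay of $|u'|$, which in turn improves the growth estimate to $|u(t)-u_{*}|=O((1+t)^{\alpha/2})$; this bootstrap is where the hypothesis $\alpha\le\tfrac12$ becomes essential, since it makes the exponent $\alpha-\tfrac12\le0$ and closes the loop.

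Once these bounds are available, $\mathcal E$ is also bounded below, so it has a finite limit and the negative part of $\mathcal E'$ is integrable; this gives $\int_{0}^{\infty}(1+t)\gamma(t)|u'|^{2}\,dt<\infty$, whence $\int_{0}^{\infty}|u'|^{2}\,dt<\infty$ because $(1+t)\gamma(t)\ge K(1+t)^{1-\alpha}\ge K$ for $\alpha\le\tfrac12$. Integrating the anchoring inequality and integrating $\int\gamma h'$ by parts (the term $-\int\gamma'h$ having a favorable sign) then yields $\int_{0}^{\infty}(\phi(u)-\min\phi)\,dt<\infty$. Thus $\int_{0}^{\infty}E\,dt<\infty$, and since $E$ is nonincreasing the elementary estimate $\tfrac t2E(t)\le\int_{t/2}^{t}E\,ds\to0$ gives exactly $E(t)=o(1/t)$.

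For the weak convergence I would invoke Opial's lemma with $S=\arg\min\phi$. For the first hypothesis I must show that $\lim_{t\to\infty}|u(t)-z|$ exists for every $z\in S$; writing the anchoring inequality for $z$ as $h_{z}''+\gamma h_{z}'\le|u'|^{2}$ and using the integrating factor $\exp(\int_{0}^{t}\gamma)$, one controls $\int_{0}^{\infty}[h_{z}']^{+}\,dt$ by a constant times $\int_{0}^{\infty}|u'|^{2}/\gamma\,dt$, which is finite precisely because $\alpha\le\tfrac12$ makes $(1+t)^{\alpha}\le(1+t)^{1-\alpha}$; as $h_{z}\ge0$ has integrable positive derivative part, $\lim h_{z}$ exists. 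In particular $u$ is bounded in $H$, and combined with the boundedness of $\phi(u)$ the semi-coercivity \eqref{coer} gives boundedness in $V$, so weak cluster points exist. The second hypothesis follows because $E\to0$ forces $\phi(u(t))\to\min\phi$, and the weak lower semicontinuity of the convex $C^{1}$ function $\phi$ shows that every weak limit point lies in $\arg\min\phi$. The main obstacle is the pair of a priori growth estimates obtained \emph{without} the boundedness assumption, together with the borderline integrability at $\alpha=\tfrac12$: it is the interplay between the damping lower bound and the coupling term $\langle u-u_{*},u'\rangle$ that must be tuned, and $\alpha=\tfrac12$ is exactly the threshold at which both the bootstrap and the $L^{1}$ estimates remain closed.
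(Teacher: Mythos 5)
Your proposal is correct in substance and, modulo routine details, does prove the theorem --- but it is organized quite differently from the paper's proof, so a comparison is worthwhile. Both arguments rest on the same two ingredients, the dissipation identity $\frac{d}{dt}E=-\gamma|u'|^{2}$ and the convexity (anchoring) inequality $h''+\gamma h'+(\phi(u)-\min\phi)\le|u'|^{2}$, both end with Opial's lemma, and both spend the hypothesis $\alpha\le\frac12$ at the same decisive point: passing from $\int_{0}^{\infty}(1+t)^{1-\alpha}|u'|^{2}dt<\infty$ to $\int_{0}^{\infty}(1+t)^{\alpha}|u'|^{2}dt<\infty$, which is what the Opial argument consumes. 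The difference is the middle machinery. The paper multiplies the anchoring inequality by the whole family of weights $\lambda_{r}(t)=(1+t)^{r}$, integrates by parts, removes the boundary terms via Cauchy--Schwarz and the bound $bx-ax^{2}\le b^{2}/4a$, and then iterates ($r=-\alpha$, then $r=0$) through Lemma \ref{le1}, which converts $\int\lambda_{r}\mathcal{E}<\infty$ into $\mathcal{E}=\circ(t^{-1-r})$ and $\int\lambda_{r+1-\alpha}|u'|^{2}<\infty$ by Fubini; the weak convergence is then delegated to Lemma \ref{le3}, quoted from Cabot--Frankel. You instead run a single Lyapunov functional $tE+\langle u-u_{*},u'\rangle+\frac{\gamma}{2}|u-u_{*}|^{2}$ --- which, if one integrates $\frac{d}{dt}(tE)=E-t\gamma|u'|^{2}$ by parts, is exactly the paper's $r=0$ inequality in integrated form --- and you extract pointwise growth/decay bounds from its eventual monotonicity, the weighted integrability $\int(1+t)\gamma|u'|^{2}dt<\infty$ from its convergence, and then verify Opial's two hypotheses by hand with the integrating factor $e^{\int_{0}^{t}\gamma}$. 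What your route buys: it is self-contained (no appeal to the Cabot--Frankel lemma) and yields explicit pointwise estimates such as $E=O(1/t)$ and $|u(t)-u_{*}|=O(t^{\alpha/2})$, valid in fact for every $\alpha<1$. What the paper's route buys: the weight family and the iteration lemma are precisely what generalize to Theorem \ref{th3}, where $\alpha>\frac12$ and one must run an induction over admissible exponents; a single functional would not suffice there. Two spots in your sketch should be written out, though neither is a gap in substance: (i) the sentence ``inserting this back into the dissipation estimate improves the decay of $|u'|$'' hides the actual mechanism, which is completing the square in $\mathcal{E}(t)\le C$ via $\langle u-u_{*},u'\rangle+\frac{\gamma}{2}|u-u_{*}|^{2}\ge-\frac{|u'|^{2}}{2\gamma}\ge-\frac{E}{\gamma}$, giving $E(t)\bigl(t-\frac{1}{\gamma(t)}\bigr)\le C$, hence $E=O(1/t)$ and then $|u(t)-u_{*}|=O(t^{\alpha/2})$ in one stroke; and (ii) the claim that boundedness of $|u(t)|$ and of $\phi(u(t))$ together with (\ref{coer}) bounds $\|u(t)\|_{V}$ requires the convex minorization $F(u)\ge F(u_{*})+\langle f(u_{*}),u-u_{*}\rangle_{V',V}$, so that $F$ cannot compensate $a(u,u)$; both are routine to fill.
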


\begin{theorem}
\label{th3}Assume that there exist $\alpha \in \lbrack 0,1[$, $K>0$ and $%
t_{0}\geq 0$ such that $\gamma (t)\geq \frac{K}{(1+t)^{\alpha }}$ for every $%
t\geq 0$ and $\gamma ^{\prime }(t)\leq -\alpha \frac{\gamma (t)}{1+t}$ for
almost every $t\geq t_{0}.$ Let $u$ be a solution to (E), then $u(t)$
converges weakly in $V$ as $t\rightarrow +\infty $ toward some $u_{\infty
}\in \arg \min \phi $. Moreover, for every $\bar{\alpha}<\alpha ,$%
\begin{equation*}
\frac{1}{2}\left\vert \frac{du}{dt}(t)\right\vert ^{2}+\phi (u(t))-\min \phi
=\circ (\frac{1}{t^{1+\bar{\alpha}}})\text{ as }t\rightarrow +\infty .
\end{equation*}
\end{theorem}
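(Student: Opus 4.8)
The starting point is the energy
\[
E(t)=\tfrac12\left|\tfrac{du}{dt}(t)\right|^{2}+\phi(u(t))-\min\phi\ \ge 0,
\]
for which (E) gives the exact dissipation identity $E'(t)=-\gamma(t)\left|\tfrac{du}{dt}(t)\right|^{2}\le 0$; hence $E$ is nonincreasing and the whole task is to quantify this decay and to convert it into convergence of the trajectory. Fix $z\in\arg\min\phi$ and set $h_{z}(t)=\tfrac12|u(t)-z|^{2}$. Besides the dissipation identity, the second relation I would use comes from (E) together with the convexity inequality $\langle Au+f(u),u-z\rangle_{V',V}\ge\phi(u)-\min\phi$, namely $h_{z}''+\gamma h_{z}'+(\phi(u)-\min\phi)\le\left|\tfrac{du}{dt}\right|^{2}$. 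To capture the rate I would fix $\bar\alpha<\alpha$ and study a weighted Lyapunov function of the form $\mathcal E(t)=(1+t)^{1+\bar\alpha}E(t)+(\text{cross terms in }\langle\tfrac{du}{dt},u-z\rangle\text{ and }|u-z|^{2})$, the coefficients of the lower-order terms being tuned so that the indefinite contributions telescope and so that $\mathcal E$ stays comparable to $(1+t)^{1+\bar\alpha}E$.

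Differentiating $\mathcal E$ and substituting these relations, the dangerous terms are those produced by differentiating the weights multiplying $\gamma$ and $h_{z}$, and this is exactly where the hypothesis $\gamma'(t)\le-\alpha\,\gamma(t)/(1+t)$ enters: a term such as $\tfrac{d}{dt}\big[(1+t)^{\bar\alpha}\gamma(t)h_{z}(t)\big]$ contains $(1+t)^{\bar\alpha}\gamma'h_{z}$, which the bound on $\gamma'$ turns into $\le-\alpha(1+t)^{\bar\alpha-1}\gamma h_{z}$; combined with the $+\bar\alpha(1+t)^{\bar\alpha-1}\gamma h_{z}$ arising from differentiating $(1+t)^{\bar\alpha}$, this produces the negative quantity $(\bar\alpha-\alpha)(1+t)^{\bar\alpha-1}\gamma h_{z}$, whose favorable sign is precisely the role of $\bar\alpha<\alpha$. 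After these cancellations I expect $\mathcal E'(t)\le-c\,(1+t)^{1+\bar\alpha-\alpha}\left|\tfrac{du}{dt}\right|^{2}+(\text{integrable remainder})$, so that $\mathcal E$ is bounded and, by positivity of its quadratic part, $E(t)\le C_{\bar\alpha}(1+t)^{-(1+\bar\alpha)}$. The passage from $O$ to the stated little-$o$ is then free: since this holds for every $\bar\alpha<\alpha$, applying it with some $\bar\beta\in(\bar\alpha,\alpha)$ gives $E(t)=O((1+t)^{-(1+\bar\beta)})=o((1+t)^{-(1+\bar\alpha)})$.

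The second, crucial, output of the same estimate is the weighted dissipation bound obtained by integrating $\mathcal E'$, namely $\int_{t_{0}}^{\infty}(1+t)^{1+\bar\alpha-\alpha}\left|\tfrac{du}{dt}\right|^{2}dt<\infty$; choosing $\bar\alpha\ge 2\alpha-1$ (possible since $2\alpha-1<\alpha$ for $\alpha<1$, and automatic when $\alpha\le\tfrac12$) upgrades this to $\int_{t_{0}}^{\infty}(1+t)^{\alpha}\left|\tfrac{du}{dt}\right|^{2}dt<\infty$. This integrability, not the pointwise rate, is what drives the convergence of the trajectory and lets me run Opial's lemma. Writing $\Gamma(t)=\exp\int_{t_{0}}^{t}\gamma$ and integrating $(\Gamma h_{z}')'=\Gamma(h_{z}''+\gamma h_{z}')\le\Gamma\left|\tfrac{du}{dt}\right|^{2}$, I would represent $h_{z}'$ as $\rho(t)/\Gamma(t)+\Gamma(t)^{-1}\int_{t_{0}}^{t}\Gamma\left|\tfrac{du}{dt}\right|^{2}$ with $\rho$ nonincreasing; then a Fubini computation together with the elementary estimate $\Gamma(s)\int_{s}^{\infty}\Gamma(t)^{-1}dt\le C(1+s)^{\alpha}$ (which uses that $\gamma$ is decreasing, $\int\gamma=\infty$, and $\gamma\ge K(1+\cdot)^{-\alpha}$, and also $1/\Gamma\in L^{1}$) yields $\int_{t_{0}}^{\infty}[h_{z}'(t)]^{+}dt\lesssim\int_{t_{0}}^{\infty}(1+s)^{\alpha}\left|\tfrac{du}{dt}\right|^{2}ds<\infty$. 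Since $h_{z}\ge 0$, the function $h_{z}-\int_{t_{0}}^{\cdot}[h_{z}']^{+}$ is nonincreasing and bounded below, hence $h_{z}$ converges; this gives the first Opial hypothesis and the $H$-boundedness of $u$. The second hypothesis is immediate: $E(t)\to 0$ forces $\phi(u(t))\to\min\phi$, so by weak lower semicontinuity of the convex continuous $\phi$ every weak cluster point of $u(t)$ lies in $\arg\min\phi$. Opial's lemma then gives weak-$H$ convergence to some $u_{\infty}\in\arg\min\phi$, and the semi-coercivity \eqref{coer} combined with the affine minorization of the convex $F$ and the boundedness of $\phi(u(t))$ and $|u(t)|$ yields boundedness of $\|u(t)\|_{V}$, upgrading this to weak convergence in $V$.

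The step I expect to be the \emph{main obstacle} is the construction and differentiation of $\mathcal E$: one must choose the several time-dependent coefficients so that, simultaneously, $\mathcal E$ remains comparable to $(1+t)^{1+\bar\alpha}E$, all indefinite cross terms cancel, and the surviving terms are either a good negative multiple of $(1+t)^{1+\bar\alpha-\alpha}\left|\tfrac{du}{dt}\right|^{2}$ or integrable in $t$. The hypothesis on $\gamma'$ is exactly the structural input that makes the weight-derivative terms cooperate, and carefully tracking the admissible exponents is what pins down both the threshold $\bar\alpha<\alpha$ and, through the choice $\bar\alpha\ge 2\alpha-1$, the weighted integrability that the final Opial argument consumes.
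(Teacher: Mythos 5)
Your proposal is correct, but it reaches the theorem by a genuinely different route from the paper. The paper works entirely with \emph{integral} estimates: it multiplies $\ddot p+\gamma \dot p\le \frac32\left\vert \frac{du}{dt}\right\vert^2-\mathcal E$ by $\lambda_r(t)=(1+t)^r$ and integrates by parts to get (\ref{Energy}); Lemma \ref{le1} converts $\int_0^\infty\lambda_\nu \mathcal E\,dt<\infty$ into a pointwise little-$o$ rate plus a weighted dissipation bound, and Lemma \ref{le2} (the only place the hypothesis on $\gamma'$ enters, via $(\lambda_\rho\gamma)'-\lambda_\rho''\le 0$ for $\rho<\alpha$) bootstraps the admissible exponent upward by $1-\alpha$ per step, a supremum argument over exponents then reaching every $\bar\alpha<\alpha$. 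The iteration is forced on the paper because the boundary term $\frac2K\lambda_{r+\alpha}(T)\mathcal E(T)$ in (\ref{Energy}) can only be tamed using the decay already proved at the previous step. Your single-pass Lyapunov functional sidesteps the bootstrap because the troublesome cross term is absorbed \emph{inside} the functional by Young's inequality rather than by previously established decay. Moreover, the step you flag as the main obstacle does close: with $\lambda(t)=(1+t)^{1+\bar\alpha}$ and $p(t)=\frac12\vert u(t)-z\vert^2$, take
\begin{equation*}
L=\lambda\,\mathcal E+\lambda'\,\dot p+\left(\lambda'\gamma-\lambda''\right)p .
\end{equation*}
Then the coefficients of $\mathcal E$ and of $\dot p$ in $L'$ vanish identically, the coefficient of $\left\vert\frac{du}{dt}\right\vert^2$ is $\frac32\lambda'-\lambda\gamma\le-\frac K2(1+t)^{1+\bar\alpha-\alpha}$ for large $t$ (since $\alpha<1$), and the coefficient of $p$ is $(\lambda'\gamma)'-\lambda'''$, which is $\le 0$ for a.e.\ large $t$ by exactly the cancellation you describe: $\gamma'\le-\alpha\gamma/(1+t)$ gives $(\lambda'\gamma)'\le-(\alpha-\bar\alpha)(1+\bar\alpha)K(1+t)^{\bar\alpha-1-\alpha}$, which dominates $\lambda'''=O((1+t)^{\bar\alpha-2})$ because $\alpha<1$. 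One caution: this $p$-term must be killed by a \emph{sign} comparison of coefficients, not relegated to an ``integrable remainder'' as your wording suggests, since $p$ is not a priori bounded (that is precisely the obstruction the theorem removes); your own cancellation computation does the right thing, so the slip is only verbal. Coercivity $L\ge\frac\lambda2\mathcal E\ge 0$ holds for large $t$ because $\lambda'\gamma-\lambda''\ge 2(\lambda')^2/\lambda$ there, so monotonicity of $L$ yields both $\mathcal E(t)=O(t^{-1-\bar\alpha})$ and the weighted dissipation integral; your $O$-to-$\circ$ upgrade and the choice $\bar\alpha\ge 2\alpha-1$ coincide with what the paper extracts from its lemmas. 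Finally, for the endgame the paper simply invokes Lemma \ref{le3} (quoted from Cabot--Frankel), whereas you re-derive it (integrating factor, the kernel bound $\Gamma(s)\int_s^\infty\Gamma^{-1}dt\le C(1+s)^\alpha$, Fubini, Opial, semi-coercivity); that sketch is the standard argument and is sound, with the minor proviso that the $V$-boundedness of $u$ should be established \emph{before} identifying weak cluster points, since $\phi$ is only weakly lower semicontinuous on $V$. Net comparison: your approach is more self-contained and avoids the iteration, at the cost of exhibiting and differentiating the corrected functional; the paper's approach requires no tuning of coefficients and its Lemma \ref{le1} delivers the little-$o$ rate directly from integrability.
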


\section{Proof of Theorem \protect\ref{th2} and Theorem \protect\ref{th3}}

We will first prove some preliminary results under the following general
hypothesis on the damping term $\gamma :$%
\begin{equation}
\exists K>0\text{ and }\alpha \in \lbrack 0,1[:\forall t\geq 0,~\gamma
(t)\geq \frac{K}{(1+t)^{\alpha }}.  \label{hyp}
\end{equation}%
These results will be useful in the proofs of Theorem \ref{th2} and Theorem %
\ref{th3}.

\noindent Let $u$ be a solution to the equation (E). Define the energy
function 
\begin{equation}
\mathcal{E}(t)=\frac{1}{2}\left\vert \frac{du}{dt}(t)\right\vert ^{2}+\phi
(u(t))-\min \phi ,~t\geq 0.  \label{energy1}
\end{equation}%
A simple computation yields%
\begin{equation*}
\frac{d\mathcal{E}}{dt}(t)=-\gamma (t)\left\vert \frac{du}{dt}(t)\right\vert
^{2},~a.e.~t\geq 0.
\end{equation*}%
Thus the function $\mathcal{E}$ is decreasing and converges as $t\rightarrow
+\infty $ to some real number $\mathcal{E}_{\infty }$ which will be
identified later. Moreover%
\begin{equation}
\int_{0}^{+\infty }\gamma (t)\left\vert \frac{du}{dt}(t)\right\vert
^{2}dt<\infty  \label{energy2}
\end{equation}%
and 
\begin{equation}
\forall t\geq 0,~\mathcal{E}(t)-\mathcal{E}_{\infty }=\int_{t}^{+\infty
}\gamma (s)\left\vert \frac{du}{dt}(s)\right\vert ^{2}ds.  \label{energy3}
\end{equation}%
Let $v$ be a fixed point in $\arg \min \phi $ and define the function $p(t)=%
\frac{1}{2}\left\vert u(t)-v\right\vert ^{2},t\geq 0.$ Proceeding as in the
proof of Proposition 3.5 in \cite{CF}, one can easily prove that for almost
every $t$ in $\mathbb{R}^{+}$ we have%
\begin{equation*}
\ddot{p}(t)+\gamma (t)\dot{p}(t)\leq \frac{3}{2}\left\vert \frac{du}{dt}%
(t)\right\vert ^{2}-\mathcal{E}(t).
\end{equation*}%
Multiplying the last inequality by $\lambda _{r}(t)=(1+t)^{r},$ $r\in 
\mathbb{R},$ and integrating by parts over the interval $[0,T],~T>0$, we
easily obtain after simplification%
\begin{eqnarray}
\int_{0}^{T}\lambda _{r}(t)\mathcal{E}(t)dt &\leq &\frac{3}{2}%
\int_{0}^{T}\lambda _{r}(t)\left\vert \frac{du}{dt}(t)\right\vert
^{2}dt-\lambda _{r}(T)\dot{p}(T)+\left[ \lambda _{r}^{\prime }-\left( \gamma
\lambda _{r}\right) \right] (T)p(T)  \notag \\
&&+\int_{0}^{T}\left[ \left( \lambda _{r}\gamma \right) ^{\prime }-\lambda
_{r}^{\prime \prime }\right] (t)p(t)dt+C_{r}  \label{energy4}
\end{eqnarray}%
where $C_{r}=\dot{p}(0)+(\gamma (0)-r)p(0).$

\noindent Since $\gamma $ satisfies (\ref{hyp}), $\lambda _{r}^{\prime
}(T)=\circ \left[ \left( \gamma \lambda _{r}\right) (T)\right] $ as $%
T\rightarrow +\infty .$ Thus, there exits $T_{r}\geq 0$ such 
\begin{equation}
\forall t\geq T_{r},~\lambda _{r}^{\prime }(T)-\left( \gamma \lambda
_{r}\right) (T)\leq -\frac{1}{2}\left( \gamma \lambda _{r}\right) (T).
\label{est1}
\end{equation}%
On the other hand, thanks to Cauchy-Schwarz inequality, we have%
\begin{eqnarray}
\left\vert \dot{p}(T)\right\vert &\leq &\left\vert \frac{du}{dt}%
(t)\right\vert \left\vert u(t)-v\right\vert  \label{est2} \\
&\leq &2\sqrt{\mathcal{E}(T)}\sqrt{p(T)}  \notag
\end{eqnarray}%
Inserting estimates (\ref{est1})-(\ref{est2}) into (\ref{energy4}) and using
hypothesis (\ref{hyp}) and the following elementary inequality%
\begin{equation*}
\forall a>0~\forall ~x,b\in \mathbb{R},~bx-ax^{2}\leq \frac{b^{2}}{4a},
\end{equation*}%
with $x=\sqrt{p(T)},$ we deduce that for every $T\geq T_{r}$ we have%
\begin{eqnarray}
\int_{0}^{T}\lambda _{r}(t)\mathcal{E}(t)dt &\leq &\frac{3}{2}%
\int_{0}^{T}\lambda _{r}(t)\left\vert \frac{du}{dt}(t)\right\vert ^{2}dt+%
\frac{2}{K}\lambda _{r+\alpha }(T)\mathcal{E}(T)  \notag \\
&&+\int_{0}^{T}\left[ \left( \lambda _{r}\gamma \right) ^{\prime
}(t)-\lambda _{r}^{\prime \prime }(t)\right] p(t)dt+C_{r}.  \label{Energy}
\end{eqnarray}%
Let us notice that if $r\leq 0,$ $\left( \lambda _{r}\gamma \right) ^{\prime
}(t)-\lambda _{r}^{\prime \prime }(t)\leq 0$ $a.e.$ on $\mathbb{R}^{+}$
(since the function $\lambda _{r}\gamma $ is decreasing and the function $%
\lambda _{r}$ is convex); then, in the case where $r\leq 0,$ (\ref{Energy})
becomes 
\begin{equation}
\forall T\geq T_{r},~\int_{0}^{T}\lambda _{r}(t)\mathcal{E}(t)dt\leq \frac{3%
}{2}\int_{0}^{T}\lambda _{r}(t)\left\vert \frac{du}{dt}(t)\right\vert ^{2}dt+%
\frac{2}{K}\lambda _{r+\alpha }(T)\mathcal{E}(T)+C_{r}.  \label{E1}
\end{equation}%
Letting $r=-\alpha $ in the last inequality and using (\ref{energy2}) and
the fact that is $\mathcal{E}$ a decreasing function, we get%
\begin{equation*}
\forall T\geq T_{-\alpha },~\int_{0}^{T}\lambda _{-\alpha }(t)\mathcal{E}%
(t)dt\leq \frac{3}{2K}\int_{0}^{+\infty }\gamma (t)\left\vert \frac{du}{dt}%
(t)\right\vert ^{2}dt+\frac{2}{K}\mathcal{E}(0)+C_{-\alpha },
\end{equation*}%
which implies that%
\begin{equation}
\int_{0}^{+\infty }\lambda _{-\alpha }(t)\mathcal{E}(t)dt<\infty .
\label{E2}
\end{equation}%
Recalling that $\alpha <1,$ we then deduce that the limit $\mathcal{E}%
_{\infty }$\ of $\mathcal{E}(t)$ as $t\rightarrow +\infty $ is equal to zero.

\noindent Let us now prove the following crucial lemma:

\begin{lemma}
\label{le1}Let $r\in \mathbb{R}\backslash \{-1\}.$ If $\int_{0}^{+\infty
}\lambda _{r}(t)\mathcal{E}(t)dt<\infty $ then $\mathcal{E}(t)=\circ
(1/t^{1+r})$ as $t\rightarrow +\infty $ and $\int_{0}^{+\infty }\lambda
_{r+1-\alpha }(t)\left\vert \frac{du}{dt}(t)\right\vert ^{2}dt<\infty .$
\end{lemma}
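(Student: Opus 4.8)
The plan is to prove the two assertions in the order stated, relying only on three facts already available: $\mathcal{E}$ is nonnegative and decreasing, the hypothesis $\int_0^{+\infty}\lambda_r\mathcal{E}<+\infty$, and the identity $\frac{d\mathcal{E}}{dt}=-\gamma\,|\frac{du}{dt}|^2$. I would obtain the pointwise decay first and then feed it into an integration by parts to get the weighted integrability of $|\frac{du}{dt}|^2$.

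For the first assertion I would use the classical comparison between monotonicity and integrability. Because $\mathcal{E}$ is decreasing, for every $t>0$,
\[
\mathcal{E}(t)\int_{t/2}^{t}\lambda_r(s)\,ds\le \int_{t/2}^{t}\lambda_r(s)\mathcal{E}(s)\,ds\le \int_{t/2}^{+\infty}\lambda_r(s)\mathcal{E}(s)\,ds,
\]
and the last term tends to $0$ as $t\to+\infty$, being the tail of a convergent integral. An explicit computation gives $\int_{t/2}^{t}\lambda_r(s)\,ds=\frac{(1+t)^{1+r}-(1+t/2)^{1+r}}{1+r}$, which, since $r\neq-1$, is asymptotically equivalent to $c_r\,t^{1+r}$ for a constant $c_r>0$ (checked separately according to the sign of $1+r$). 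Dividing then yields $t^{1+r}\mathcal{E}(t)\to 0$, i.e.\ $\mathcal{E}(t)=\circ(1/t^{1+r})$. The exclusion $r\neq-1$ is essential here: at $r=-1$ the weight integral grows only like $\log t$ and the power rate breaks down.

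For the second assertion I would integrate by parts against $\lambda_{r+1}$, using that $\mathcal{E}$ is locally absolutely continuous with $\frac{d\mathcal{E}}{dt}=-\gamma|\frac{du}{dt}|^2$:
\[
\int_0^{T}\lambda_{r+1}(t)\,\gamma(t)\left|\frac{du}{dt}(t)\right|^2 dt=\mathcal{E}(0)-\lambda_{r+1}(T)\mathcal{E}(T)+(1+r)\int_0^{T}\lambda_r(t)\mathcal{E}(t)\,dt.
\]
Letting $T\to+\infty$, the boundary term $\lambda_{r+1}(T)\mathcal{E}(T)$ vanishes by the first assertion, while the last integral converges by hypothesis; hence the left-hand side has a finite limit. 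Since $\gamma(t)\ge K\lambda_{-\alpha}(t)$ gives $\lambda_{r+1-\alpha}(t)\le \frac1K\lambda_{r+1}(t)\gamma(t)$, I conclude that $\int_0^{+\infty}\lambda_{r+1-\alpha}(t)|\frac{du}{dt}(t)|^2dt\le\frac1K\int_0^{+\infty}\lambda_{r+1}(t)\gamma(t)|\frac{du}{dt}(t)|^2dt<+\infty$.

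The only delicate points are the two asymptotic facts about the weight: that $\int_{t/2}^t\lambda_r\sim c_r t^{1+r}$ with $c_r>0$, and that $\lambda_{r+1}(T)\mathcal{E}(T)\to0$. The latter is precisely the content of the first assertion, which is exactly why the two must be proved in this order; both rest on $r\neq-1$, and the remaining manipulations (the integration by parts and the use of $\gamma\ge K\lambda_{-\alpha}$) are routine given the nonnegativity of all integrands.
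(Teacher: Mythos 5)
Your proof is correct, and it splits from the paper halfway through. The first assertion is proved exactly as in the paper: the same monotonicity comparison $\mathcal{E}(t)\int_{t/2}^{t}\lambda _{r}(s)\,ds\leq \int_{t/2}^{+\infty }\lambda _{r}(s)\mathcal{E}(s)\,ds$ combined with the asymptotics $\int_{t/2}^{t}\lambda _{r}(s)\,ds\sim c_{r}t^{1+r}$; you are in fact slightly more careful than the paper, which only calls the constant ``nonnegative'' and does not flag the sign distinction at $1+r\lessgtr 0$. For the second assertion you take a genuinely different route: you integrate the identity $\frac{d}{dt}\bigl[ \lambda _{r+1}\mathcal{E}\bigr] =(1+r)\lambda _{r}\mathcal{E}-\lambda _{r+1}\gamma \left\vert \frac{du}{dt}\right\vert ^{2}$ over $[0,T]$ and kill the boundary term $\lambda _{r+1}(T)\mathcal{E}(T)$ using the decay you just proved, whereas the paper applies Fubini's theorem to the representation $\mathcal{E}(t)=\int_{t}^{+\infty }\gamma (s)\left\vert \frac{du}{dt}(s)\right\vert ^{2}ds$ (which rests on the fact $\mathcal{E}_{\infty }=0$ established \emph{before} the lemma) to obtain $\int_{0}^{+\infty }\lambda _{r}\mathcal{E}\,dt=\frac{1}{1+r}\int_{0}^{+\infty }\gamma (s)\left[ (1+s)^{r+1}-1\right] \left\vert \frac{du}{dt}(s)\right\vert ^{2}ds$. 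The two computations are dual (Fubini on that double integral is the integrated form of your integration by parts), but the logical structure differs: the paper's second assertion is independent of the first, at the price of importing $\mathcal{E}_{\infty }=0$ from outside the lemma and of a small case discussion hidden in its ``clearly implies'' step; your version is self-contained within the lemma, handles the sign of $1+r$ uniformly, but makes the second assertion depend on the first through the boundary term. Both proofs finish identically, converting $\gamma \geq K\lambda _{-\alpha }$ into $\lambda _{r+1-\alpha }\leq \frac{1}{K}\lambda _{r+1}\gamma $, and your integration by parts is legitimate since $\mathcal{E}\in W_{loc}^{1,1}$ and $\lambda _{r+1}$ is smooth, so there is no gap anywhere.
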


\begin{proof}
Since the energy function $\mathcal{E}$ is decreasing, we have 
\begin{equation}
\mathcal{E}(t)\int_{\frac{t}{2}}^{t}(1+s)^{r}ds\leq \int_{\frac{t}{2}%
}^{+\infty }\lambda _{r}(s)\mathcal{E}(s)ds.  \label{R}
\end{equation}%
A simple computation yields $\int_{\frac{t}{2}}^{t}(1+s)^{r}ds\simeq
M_{r}t^{r+1}$ for $t$ large enough where $M_{r}$ is a nonnegative constant
depending only on $r$. Inserting this last estimate into (\ref{R}), we get $%
\lim_{t\rightarrow +\infty }t^{1+r}\mathcal{E}(t)=0.$ On the other hand, by
using equality (\ref{energy3}), the fact that $\mathcal{E}_{\infty }=0,$ and
Fubini Theorem, we obtain%
\begin{equation*}
\int_{0}^{+\infty }\lambda _{r}(t)\mathcal{E}(t)dt=\frac{1}{1+r}%
\int_{0}^{+\infty }\gamma (s)\left[ (1+s)^{r+1}-1\right] \left\vert \frac{du%
}{dt}(s)\right\vert ^{2}ds,
\end{equation*}%
which clearly implies that $\int_{0}^{+\infty }\lambda _{r+1-\alpha
}(t)\left\vert \frac{du}{dt}(t)\right\vert ^{2}dt<\infty $ since $%
\int_{0}^{+\infty }\gamma (s)\left\vert \frac{du}{dt}(s)\right\vert
^{2}ds<\infty $ and $\gamma (s)\geq \frac{K}{(1+s)^{\alpha }}.$
\end{proof}

Now we are in position to complete the proof of our first main
theorem.\smallskip

\noindent \textbf{Proof of Theorem \ref{th2}:} In view of (\ref{E2}), Lemma %
\ref{le1} implies $\mathcal{E}(t)=\circ (t^{\alpha -1})$ as $t\rightarrow
+\infty $ and $\int_{0}^{+\infty }\lambda _{1-2\alpha }(t)\left\vert \frac{du%
}{dt}(t)\right\vert ^{2}dt<\infty .$ Hence by letting $r=0$ in (\ref{E1}),
we get, for $T$ large enough, 
\begin{equation*}
\int_{0}^{T}\mathcal{E}(t)dt\leq \frac{3}{2}\int_{0}^{T}\left\vert \frac{du}{%
dt}(t)\right\vert ^{2}dt+\circ (T^{2\alpha -1})+C_{0}.
\end{equation*}%
Therefore, by letting $T\rightarrow +\infty $ and using the assumption $%
\alpha \leq \frac{1}{2},$ we get%
\begin{equation*}
\int_{0}^{\infty }\mathcal{E}(t)dt\leq \frac{3}{2}\int_{0}^{\infty }\lambda
_{1-2\alpha }(t)\left\vert \frac{du}{dt}(t)\right\vert ^{2}dt+C_{0},
\end{equation*}%
Hence, by using once again Lemma \ref{le1}, we deduce that $\mathcal{E}%
(t)=\circ (1/t)$ as $t\rightarrow +\infty $ and that $\int_{0}^{+\infty
}\lambda _{1-\alpha }(t)\left\vert \frac{du}{dt}(t)\right\vert ^{2}dt<\infty 
$ which implies, since $\alpha \leq \frac{1}{2},$ that $\int_{0}^{+\infty
}(1+t)^{\alpha }\left\vert \frac{du}{dt}(t)\right\vert ^{2}dt<\infty $.
Therefore we deduce the weak convergence of $u(t)$ in $V$ as $t\rightarrow
+\infty $ from the following lemma which is implicitly proved in \cite{CF}
(see the proofs of Theorem 3.7 and Theorem 3.13) by adapting a classical
arguments originated by F. Alvarez \cite{Al} based on the famous Opial's
lemma \cite{Op}.

\begin{lemma}
\label{le3}Assume (\ref{hyp})$.$ Let $u$ be a solution to (E). If $%
\int_{0}^{\infty }(1+t)^{\alpha }\left\vert \frac{du}{dt}(t)\right\vert
^{2}dt<\infty $ then $u(t)$ converges weakly in $V$ as $t\rightarrow +\infty 
$ to some $u_{\infty }\in \arg \min \phi .$
\end{lemma}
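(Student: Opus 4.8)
The plan is to verify the two hypotheses of Opial's lemma \cite{Op} in the Hilbert space $H$ and then upgrade the resulting weak-$H$ convergence to weak convergence in $V$. Fix $z\in\arg\min\phi$ and set $p_z(t)=\frac12|u(t)-z|^2$. Differentiating twice, using equation (E) to eliminate $\ddot u$, and exploiting the convexity of $\phi$ (so that $\langle\phi'(u),u-z\rangle\ge\phi(u)-\min\phi\ge0$), I would first derive the differential inequality $\ddot p_z(t)+\gamma(t)\dot p_z(t)\le|\dot u(t)|^2$ for a.e. $t\ge0$; this is the computation already underlying Proposition 3.5 of \cite{CF}.

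The heart of the argument is to show that $\lim_{t\to\infty}p_z(t)$ exists, which is condition (i) of Opial's lemma. Introducing the integrating factor $w(t)=\exp(\int_0^t\gamma(s)\,ds)$, the inequality above reads $(w\dot p_z)'\le w|\dot u|^2$; integrating from a fixed $t_0$ yields a pointwise bound on $[\dot p_z(t)]^+$ in terms of $w(t)^{-1}$ and $w(t)^{-1}\int_{t_0}^t w(s)|\dot u(s)|^2\,ds$. Integrating this bound in $t$ and applying Fubini's theorem, the problem reduces to the single key estimate $w(s)\int_s^\infty w(t)^{-1}\,dt\le C(1+s)^\alpha$ for $s$ large. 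I expect this estimate to be the main obstacle, since it is precisely where hypothesis (\ref{hyp}) enters: the bound $\gamma(t)\ge K(1+t)^{-\alpha}$ forces $\int_0^t\gamma\ge\frac{K}{1-\alpha}[(1+t)^{1-\alpha}-1]$, and a Laplace-type (Watson) estimate then produces the polynomial factor $(1+s)^\alpha$ matching the weight in the hypothesis. Granting this, Fubini bounds $\int_{t_0}^\infty[\dot p_z]^+\,dt$ by $C\int_0^\infty(1+t)^\alpha|\dot u(t)|^2\,dt<\infty$; since $p_z\ge0$, the function $t\mapsto p_z(t)-\int_{t_0}^t[\dot p_z]^+$ is nonincreasing and bounded below, so $\lim_{t\to\infty}p_z(t)$ exists.

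Condition (ii) of Opial's lemma, namely that every weak cluster point lies in $\arg\min\phi$, follows from the energy analysis already carried out: since $\mathcal{E}_\infty=0$ one has $\phi(u(t))\to\min\phi$, and the weak lower semicontinuity of the convex continuous function $\phi$ forces any weak cluster point $\bar u$ to satisfy $\phi(\bar u)\le\min\phi$, i.e. $\bar u\in\arg\min\phi=S$. The existence of $\lim p_z(t)$ shows that $u$ is bounded in $H$; combining this with the bound on $\phi(u(t))$, the coercivity estimate (\ref{coer}), and the convexity lower bound $F(u)\ge F(z)+\langle f(z),u-z\rangle$ produces a quadratic inequality in $\|u(t)\|_V$ that yields boundedness of $u$ in $V$ as well. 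Opial's lemma in $H$ then furnishes a weak-$H$ limit $u_\infty\in S$, and since $u$ is bounded in $V$ with $V\hookrightarrow H$, every weak-$V$ cluster point of $u$ must coincide with $u_\infty$; this upgrades the convergence to the weak topology of $V$ and completes the proof, following the scheme of Alvarez \cite{Al}.
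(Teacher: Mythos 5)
Your proposal is correct and takes essentially the same route the paper itself indicates: the paper offers no proof of Lemma \ref{le3}, delegating it to the Opial--Alvarez argument implicit in the proofs of Theorems 3.7 and 3.13 of \cite{CF}, and that is precisely what you carry out (the differential inequality $\ddot p_z+\gamma\dot p_z\le|\dot u|^2$, the integrating factor, the Laplace-type bound, energy decay $\mathcal{E}(t)\to 0$ for the cluster-point condition, coercivity (\ref{coer}) for $V$-boundedness, and the passage from weak-$H$ to weak-$V$ convergence). The estimate you flag as the main obstacle, $w(s)\int_s^{\infty}w(t)^{-1}\,dt\le C(1+s)^{\alpha}$, does indeed hold under (\ref{hyp}): substituting $v=\frac{K}{1-\alpha}\left[(1+t)^{1-\alpha}-(1+s)^{1-\alpha}\right]$ reduces it to $\frac{1}{K}\int_{0}^{\infty}e^{-v}\left[(1+s)^{1-\alpha}+\frac{1-\alpha}{K}v\right]^{\alpha/(1-\alpha)}dv\le C(1+s)^{\alpha}$, so the sketch is sound.
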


Now we are going to prove our second main theorem. Hence, hereafter, we
assume that the function $\gamma $ satisfies (\ref{hyp}) and the hypothesis
on its derivative given in Theorem \ref{th3}. First we will prove the
following key lemma:

\begin{lemma}
\label{le2}If $\nu <2\alpha -1$ and $\int_{0}^{+\infty }\lambda _{\nu }(t)%
\mathcal{E}(t)dt<+\infty $ then $\int_{0}^{+\infty }\lambda _{\nu +1-\alpha
}(t)\mathcal{E}(t)dt<+\infty .$
\end{lemma}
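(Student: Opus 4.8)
The plan is to bootstrap from the weighted integrability $\int_0^{+\infty}\lambda_\nu(t)\mathcal{E}(t)\,dt<+\infty$ to the next level $\int_0^{+\infty}\lambda_{\nu+1-\alpha}(t)\mathcal{E}(t)\,dt<+\infty$, mirroring the mechanism that drove the proof of Theorem~\ref{th2}. The central tool is inequality (\ref{Energy}) with the weight $\lambda_r$ where $r=\nu+1-\alpha$. To control the right-hand side I will need two inputs. First, applying Lemma~\ref{le1} to the hypothesis $\int_0^{+\infty}\lambda_\nu(t)\mathcal{E}(t)\,dt<+\infty$ yields both $\mathcal{E}(t)=\circ(1/t^{1+\nu})$ as $t\to+\infty$ and the velocity bound $\int_0^{+\infty}\lambda_{\nu+1-\alpha}(t)\left|\frac{du}{dt}(t)\right|^2\,dt<+\infty$; this last integral is precisely the velocity term appearing on the right of (\ref{Energy}) when $r=\nu+1-\alpha$, so that term is already finite. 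Second, the boundary term $\frac{2}{K}\lambda_{r+\alpha}(T)\mathcal{E}(T)=\frac{2}{K}\lambda_{\nu+1}(T)\mathcal{E}(T)$ must be shown to stay bounded (or vanish) as $T\to+\infty$, and the decay rate $\mathcal{E}(T)=\circ(T^{-(1+\nu)})$ gives exactly $\lambda_{\nu+1}(T)\mathcal{E}(T)=\circ(1)$.

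The genuinely new difficulty, and the reason the hypothesis on $\gamma'$ is imposed in Theorem~\ref{th3}, is the potential term $\int_0^T\left[(\lambda_r\gamma)'(t)-\lambda_r''(t)\right]p(t)\,dt$. Here $r=\nu+1-\alpha$ need not be nonpositive, so the sign argument used to pass from (\ref{Energy}) to (\ref{E1}) is unavailable. I would estimate $(\lambda_r\gamma)'=\lambda_r'\gamma+\lambda_r\gamma'=r(1+t)^{r-1}\gamma+\lambda_r\gamma'$ and invoke the assumption $\gamma'(t)\leq-\alpha\frac{\gamma(t)}{1+t}$ from Theorem~\ref{th3}, which gives $(\lambda_r\gamma)'(t)\leq(r-\alpha)(1+t)^{r-1}\gamma(t)=\nu(1+t)^{r-1}\gamma(t)$ for $t\geq t_0$. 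Since $\nu<2\alpha-1<\alpha<1$ and in particular $\nu<0$ can fail, I must be careful: the sign of $\nu$ decides the sign of this contribution. When $\nu\le 0$ the term $(\lambda_r\gamma)'$ is controlled by a nonpositive quantity and, together with $-\lambda_r''(t)=-r(r-1)(1+t)^{r-2}$, the whole bracket can be bounded by a multiple of $(1+t)^{r-2}$ plus a nonpositive piece; the key is that $p(t)$ grows at most like a power of $t$ that is beaten by the weighted-energy integrability. The crux will be converting the pointwise bound on $[(\lambda_r\gamma)'-\lambda_r'']p$ into an integrable majorant, which is where the precise exponent inequality $\nu<2\alpha-1$ is consumed.

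The hard part will therefore be dominating the potential integral. The natural strategy is to relate $p(t)=\frac{1}{2}|u(t)-v|^2$ back to the energy: from (\ref{est2}) one has $|\dot p(t)|\le 2\sqrt{\mathcal{E}(t)}\sqrt{p(t)}$, which after dividing by $\sqrt{p(t)}$ and integrating yields a Gronwall-type bound $\sqrt{p(t)}\le\sqrt{p(0)}+\int_0^t\sqrt{\mathcal{E}(s)}\,ds$. Using $\mathcal{E}(s)=\circ(s^{-(1+\nu)})$ from the first application of Lemma~\ref{le1}, the integral $\int_0^t\sqrt{\mathcal{E}(s)}\,ds$ grows no faster than $t^{(1-\nu)/2}$ (up to the $\circ$-refinement), so $p(t)=O(t^{1-\nu})$. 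Feeding this growth rate against the weight $(1+t)^{r-2}=(1+t)^{\nu-1-\alpha}$ produces an integrand of order $t^{\nu-1-\alpha}\cdot t^{1-\nu}=t^{-\alpha}$, which is \emph{not} integrable on its own; this is exactly why one cannot discard the negative part of the bracket. I expect the correct accounting to use the full bracket $[(\lambda_r\gamma)'-\lambda_r'']p\le\nu(1+t)^{r-1}\gamma(t)p(t)$ for $t\ge t_0$, where the factor $(1+t)^{r-1}\gamma(t)\ge K(1+t)^{r-1-\alpha}$ combines with $p(t)$, and to absorb the resulting term by comparison with the already-finite $\int\lambda_{\nu+1-\alpha}|\dot u|^2$ via the identity in Lemma~\ref{le1}. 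The main obstacle is thus the bookkeeping that shows the potential contribution, when $\nu<2\alpha-1$, is either nonpositive (so it can be dropped) or absorbed into finite quantities, thereby closing the bootstrap and yielding $\int_0^{+\infty}\lambda_{\nu+1-\alpha}(t)\mathcal{E}(t)\,dt<+\infty$.
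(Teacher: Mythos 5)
Your overall scaffolding matches the paper's proof: apply Lemma \ref{le1} to the hypothesis to get $\mathcal{E}(t)=\circ(1/t^{1+\nu})$ and $\int_{0}^{+\infty}\lambda_{\nu+1-\alpha}(t)\left\vert \frac{du}{dt}(t)\right\vert^{2}dt<\infty$, then run inequality (\ref{Energy}) with $r=\nu+1-\alpha$, noting that the boundary term $\frac{2}{K}\lambda_{r+\alpha}(T)\mathcal{E}(T)=\frac{2}{K}\lambda_{\nu+1}(T)\mathcal{E}(T)$ is $\circ(1)$. But the one genuinely new step --- controlling the potential term $\int_{0}^{T}\left[(\lambda_{r}\gamma)'-\lambda_{r}''\right](t)\,p(t)\,dt$ --- is exactly where your argument breaks down, and it breaks down because of an arithmetic slip that then sends you in the wrong direction. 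With $r=\nu+1-\alpha$, the assumption $\gamma'(t)\leq-\alpha\frac{\gamma(t)}{1+t}$ gives $(\lambda_{r}\gamma)'(t)\leq(r-\alpha)\lambda_{r-1}(t)\gamma(t)$, and here $r-\alpha=\nu+1-2\alpha$, \emph{not} $\nu$ as you wrote. The hypothesis $\nu<2\alpha-1$ is precisely the statement that $r-\alpha<0$: this is where that exponent inequality is ``consumed,'' and it makes the coefficient negative \emph{regardless} of the sign of $\nu$. Your ensuing case analysis on whether $\nu\leq 0$, and your worry that the sign of $\nu$ decides the sign of the contribution, are artifacts of the miscalculation.

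Once the sign of $r-\alpha$ is correct, the term closes with no estimate on $p$ at all. Since $r-\alpha<0$ and $\gamma(t)\geq K(1+t)^{-\alpha}$, one gets for a.e.\ $t\geq t_{0}$
\begin{equation*}
\left[(\lambda_{r}\gamma)'-\lambda_{r}''\right](t)\leq(r-\alpha)K\,\lambda_{r-\alpha-1}(t)-r(r-1)\lambda_{r-2}(t),
\end{equation*}
and because $\alpha<1$ the exponent $r-\alpha-1$ exceeds $r-2$, so the negative first term dominates as $t\to+\infty$; hence the bracket is nonpositive for a.e.\ $t\geq\tau_{0}$ for some $\tau_{0}$. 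The potential integral over $[0,T]$ is then bounded by the constant $\int_{0}^{\tau_{0}}\left[(\lambda_{r}\gamma)'-\lambda_{r}''\right](t)p(t)\,dt$, and the conclusion follows by letting $T\to+\infty$, exactly as in the derivation of (\ref{E1}) from (\ref{Energy}) in the case $r\leq 0$. Your alternative route via the Gronwall bound $\sqrt{p(t)}\leq\sqrt{p(0)}+\int_{0}^{t}\sqrt{\mathcal{E}(s)}\,ds$ is a dead end, as you yourself observe: it produces the non-integrable majorant $t^{-\alpha}$, and no absorption against $\int\lambda_{\nu+1-\alpha}\left\vert\frac{du}{dt}\right\vert^{2}$ can rescue it. Since your write-up ends with ``I expect the correct accounting to\dots'' rather than a verified sign or absorption argument, the proof is incomplete at its crux.
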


\noindent \textbf{Proof of Lemma \ref{le2}:} Let $\nu <2\alpha -1$ such that 
$\int_{0}^{+\infty }\lambda _{\nu }(t)\mathcal{E}(t)dt<+\infty .$ According
to Lemma \ref{le1}, we have:%
\begin{equation}
\mathcal{E}(t)=\circ (1/t^{1+\nu })\text{ as }t\rightarrow +\infty
\label{H1}
\end{equation}%
and 
\begin{equation}
\int_{0}^{+\infty }\lambda _{1+\nu -\alpha }(t)\left\vert \frac{du}{dt}%
(t)\right\vert ^{2}dt<\infty .  \label{H2}
\end{equation}%
Let $\rho =1+\nu -\alpha .$ Using the hypothesis on the damping term $\gamma 
$ and the fact that $\rho <\alpha ,$ we find that for almost every $t\geq
t_{0}$ we have 
\begin{eqnarray*}
\left[ \left( \lambda _{\rho }\gamma \right) ^{\prime }-\lambda _{\rho
}^{\prime \prime }\right] (t) &\leq &\left( \rho -\alpha \right) \lambda
_{\rho -1}(t)\gamma (t)-\rho (\rho -1)\lambda _{\rho -2}(t) \\
&\leq &\left( \rho -\alpha \right) K~\lambda _{\rho -\alpha -1}(t)-\rho
(\rho -1)\lambda _{\rho -2}(t) \\
&\simeq &\left( \rho -\alpha \right) K~\lambda _{\rho -\alpha -1}(t)\text{
as }t\rightarrow +\infty .
\end{eqnarray*}%
The last inequality implies that there exists $\tau _{0}\geq \max
(T_{0},t_{0})$ such that for almost every $t\geq \tau _{0}$ we have $\left[
\left( \lambda _{\rho }\gamma \right) ^{\prime }-\lambda _{\rho }^{\prime
\prime }\right] (t)\leq 0.$ Inserting this last inequality into (\ref{Energy}%
) with $r=\rho ,$ we obtain%
\begin{equation}
\int_{0}^{T}\lambda _{\rho }(t)\mathcal{E}(t)dt\leq \frac{3}{2}%
\int_{0}^{T}\lambda _{\rho }(t)\left\vert \frac{du}{dt}(t)\right\vert ^{2}dt+%
\frac{2}{K}\lambda _{1+\nu }(T)\mathcal{E}(T)+A_{\rho }\text{ for a.e. }%
T\geq \tau _{0},  \label{H3}
\end{equation}%
where $A_{\rho }=C_{\rho }+\int_{0}^{\tau _{0}}\left[ \left( \lambda
_{r}\gamma \right) ^{\prime }(t)-\lambda _{r}^{\prime \prime }(t)\right]
p(t)dt.$ Hence, by using estimates (\ref{H1})-(\ref{H2}) and by letting $%
T\rightarrow +\infty $ in (\ref{H3}), we deduce that $\int_{0}^{+\infty
}\lambda _{\rho }(t)\mathcal{E}(t)dt<\infty .\smallskip $

Now we are in position to prove our second main theorem.\smallskip

\noindent \textbf{Proof of Theorem \ref{th3}:} We will proceed as in the
proof of Theorem 1.3 in \cite{JM}. Let $A=\{\nu \in \mathbb{R}%
:\int_{0}^{+\infty }\lambda _{\nu }(t)\mathcal{E}(t)dt<+\infty \}$. From (%
\ref{Energy}), $-\alpha \in A$, thus $A$ is a non empty interval of $\mathbb{%
R}$ which is on the forme $A=]-\infty ,\alpha _{0}[$ or $A=]-\infty ,\alpha
_{0}]$ where $\alpha _{0}=\sup A$. The previous lemma asserts that: if $\nu
<\alpha _{0}$ and $\nu <2\alpha -1$ then $\nu +1-\alpha \leq \alpha _{0}$
which means that $\min (\alpha _{0},2\alpha -1)\leq \alpha _{0}+\alpha -1.$
Now since $\alpha -1<0,$ the last inequality reads as $2\alpha -1\leq \alpha
_{0}+\alpha -1$, thus $\alpha \leq \alpha _{0}$. Therefore, by using the
defintion of $\alpha _{0}$ and Lemma \ref{le1} we infer that for all $\bar{%
\alpha}<\alpha $, $\mathcal{E}(t)=\circ (1/t^{1+\bar{\alpha}})$ as $%
t\rightarrow +\infty $ and $\int_{0}^{+\infty }(1+t)^{1+\bar{\alpha}-\alpha
}\left\vert \frac{du}{dt}(t)\right\vert ^{2}dt<\infty .$ Hence, by taking $%
\bar{\alpha}$ closed enough to $\alpha $ and using the fact that $\alpha <1,$
we deduce that $\int_{0}^{+\infty }(1+t)^{\alpha }\left\vert \frac{du}{dt}%
(t)\right\vert ^{2}dt<\infty $ which completes the proof thanks to Lemma \ref%
{le3}.\smallskip

\noindent \textbf{Acknowledgement:} The author wish to thank Prof. Mohamed
Ali Jendoubi for his comments, remarks and suggestions which were very
useful to improve the results and the representation of the paper.

\end{document}